\newcommand{\etype}[1]{\renewcommand{\labelenumi}{(#1{enumi})}}
\def\eroman{\etype{\roman}}
\newcommand{\C}{\mathbb{C}}
\newcommand{\Image}{{\operatorname{Im}\,}}
\newcommand{\Cent}{{\operatorname{Cent}}}
\newcommand{\tr}{{\operatorname{tr}}}
\newcommand{\ssl}{{\operatorname{sl}}}
\newcommand{\real}{{\operatorname{Re}\,}}
\newcommand{\ve}{{\operatorname{Ve}\,}}
\newcommand{\R}{\mathbb{R}}
\newcommand{\HH}{\mathbb{H}}
\theoremstyle{definition}
\newtheorem{definition}{Definition}
\newtheorem{rem}{Remark}
\newtheorem*{rem*}{Remark}
\newtheorem*{acknow*}{Acknowledgements}
\newtheorem*{examples*}{Examples}
\newtheorem{examples}{Example}
\theoremstyle{plain}
\newtheorem{lemma}{Lemma}
\newtheorem{theorem}{Theorem}
\newtheorem*{theorem*}{Theorem}
\newenvironment{proof-sketch}{\noindent{\bf Sketch of Proof}\hspace*{1em}}{\qed\bigskip}
\newenvironment{proof-idea}{\noindent{\bf Proof Idea}\hspace*{1em}}{\qed\bigskip}
\newenvironment{proof-of-lemma}[1]{\noindent{\bf Proof of Lemma #1}\hspace*{1em}}{\qed\bigskip}
\newenvironment{proof-of-prop}[1]{\noindent{\bf Proof of Proposition #1}\hspace*{1em}}{\qed\bigskip}
\newenvironment{proof-of-thm}[1]{\noindent{\bf Proof of Theorem #1}\hspace*{1em}}{\qed\bigskip}
\newenvironment{proof-attempt}{\noindent{\bf Proof Attempt}\hspace*{1em}}{\qed\bigskip}
\begin{document}

\title[Images of non-commutative polynomials]{The images of non-commutative polynomials evaluated on the Quaternion algebra}
\author{Sergey Malev}

\address{Department of mathematics, Ariel University} \email {sergeyma@ariel.ac.il}
\thanks{This research was supported by ERC Advanced grant Coimbra 320974.}
\thanks{The author is partially supported by Israeli Science Foundation grant no. 1623/16}
\thanks{The author was partially supported by Ariel University postdoctoral fellowship program}
\thanks{We would like to thank Agata Smoktunowicz, Danny Neftin and Alexei Kanel-Belov for interesting and fruitful discussions regarding this paper}

\maketitle

\begin{abstract}
%\subsection*{Abstract}
Let $p$ be a multilinear polynomial in several non-commuting
variables with coefficients in  an arbitrary field $K$.
Kaplansky
conjectured that for any $n$, the image of $p$ evaluated on the
set $M_n(K)$ of $n$ by $n$ matrices is a vector space.
In this paper we settle the analogous conjecture for a quaternion algebra.
\end{abstract}

%\today

%\maketitle
%\today
%\tableofcontents

\section{Introduction}
This note is the continuation of \cite{BMR1,BMR2,BMR3,BMR4,M}, in which
Kanel-Belov, Rowen and the author considered the question, reputedly
raised by Kaplansky, of the possible image set $\Image p$ of a
polynomial $p$ on matrices. (L'vov later reformulated this for
multilinear polynomials, asking whether $\Image p$ is a vector subspace.)
One can generalize this conjecture for an arbitrary simple finite dimensional algebra.
Note that for a non-simple finite dimensional algebra this conjecture can fail.
In particular, it fails for a Grassmann algebra, with a linear space of dimension more than or equal to $4$, and a field of
characteristic $\neq 2$. It is a finite dimensional (but not simple) associative algebra.
In this case one can consider a polynomial $p(x,y)=x\wedge y-y\wedge x$ which is multilinear.
In this case $e_1\wedge e_2=p(1/2e_1,e_2)$ and $e_3\wedge e_4$ both belong to the image of $p$, but their sum does not.
Thus the image is not a vector space.

 The main result in this note is for
the quaternion algebra,   
proving the following (see \S\ref{def1-q} for terminology):

\begin{theorem}
\label{main-q} If $p$ is a multilinear polynomial evaluated on
the quaternion algebra with Hamilton product $\HH(\R),$ then
$\Image p$ is either $\{0\}$, or  $\R\subseteq\HH(\R)$ (the space of scalar
quaternions), or $V\subseteq\HH(\R)$ -- the space of vector quaternions, or $\HH(\R)$. 
\end{theorem}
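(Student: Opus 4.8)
The plan is to exploit two symmetries of $\Image p$ together with the representation theory of the rotation group, and then to reduce the whole statement to a single existence question about scalar values.

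\smallskip

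First I would set up coordinates, writing every quaternion as $q=\real q+\operatorname{Vec} q$ with $\real q\in\R$ and $\operatorname{Vec} q$ in the three-dimensional space $V$ of pure quaternions, so that $\HH(\R)=\R\oplus V$. Then I record two invariances of $\Image p$. Because $\R$ is central and $p$ is multilinear, $p(\lambda x_1,x_2,\dots,x_m)=\lambda\,p(x_1,\dots,x_m)$ for $\lambda\in\R$, so $\Image p$ is a cone, stable under every real scaling. Because conjugation by a unit quaternion $u$ is an algebra automorphism, $p(ux_1u^{-1},\dots,ux_mu^{-1})=u\,p(x_1,\dots,x_m)\,u^{-1}$, so $\Image p$ is stable under the conjugation action of the unit quaternions; this action fixes $\R$ pointwise and realises every rotation of $V$, so $\Image p$ is a union of $\mathrm{SO}(3)$-orbits. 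In particular the linear span $\langle\Image p\rangle$ is an $\mathrm{SO}(3)$-subrepresentation of $\R\oplus V$. Since $\R$ (trivial) and $V$ (the standard three-dimensional irreducible) are non-isomorphic irreducibles, the only subrepresentations are $0,\ \R,\ V,\ \HH(\R)$, so $\langle\Image p\rangle$ is one of these four spaces. The three degenerate cases are then immediate: if $\langle\Image p\rangle=0$ then $\Image p=\{0\}$; if $\langle\Image p\rangle=\R$ then $\Image p\subseteq\R$ is a cone, so any nonzero value forces $\Image p=\R$; and if $\langle\Image p\rangle=V$ then a single nonzero value carries its whole $\mathrm{SO}(3)$-orbit (a sphere) into $\Image p$, and scaling sweeps out every radius, giving $\Image p=V$.

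\smallskip

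The remaining case is $\langle\Image p\rangle=\HH(\R)$, where I must prove $\Image p=\HH(\R)$. Here I would pass to the \emph{profile} of the image: to each nonzero value $q$ associate the pair $(\real q,\,|\operatorname{Vec} q|)$ in the closed half-plane $\{(s,r):r\ge 0\}$, and record the direction $\theta(q)\in[0,\pi]$ of the ray it spans. The two symmetries show that the set $\Theta\subseteq[0,\pi]$ of attained directions determines $\Image p$ completely (each direction contributes a full ray by homogeneity, and $\mathrm{SO}(3)$ restores the sphere of radius $r$), and that $\Theta$ is symmetric under $\theta\mapsto\pi-\theta$ (scale a value by $-1$). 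The structural input I want is that $\Theta$ is connected: since $p$ is a polynomial map and, in this case, $\{p=0\}$ does not separate $\HH(\R)^m$ (it has real codimension $\ge 2$, the four real components of $p$ sharing no common factor once both $\real p$ and $\operatorname{Vec} p$ are nonzero), the set $\{p\neq 0\}$ is connected and the continuous direction map carries it onto $\Theta$. A connected subset of $[0,\pi]$ symmetric about $\pi/2$ is an interval $[\theta_0,\pi-\theta_0]$; in particular it contains $\pi/2$, so a pure vector is attained and $V\subseteq\Image p$ automatically. Everything therefore comes down to showing $\theta_0=0$, that is, that $p$ attains a nonzero scalar value.

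\smallskip

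Let $\lambda=\sum_{\sigma\in S_m}c_\sigma$ be the sum of the coefficients of $p$. If $\lambda\neq 0$, I would restrict all arguments to the commutative subfield $\R[i]\cong\C$: there every monomial coincides, so $p$ acts as $\lambda\,x_1\cdots x_m$, whose values already fill $\C\supseteq\R$; hence a nonzero scalar is attained, $\theta_0=0$, and $\Theta=[0,\pi]$, giving $\Image p=\HH(\R)$. The genuine difficulty is the case $\lambda=0$ (with $\real p\not\equiv 0$), where every commutative specialisation yields $0$ and one must still produce a nonzero real value, equivalently show that $\real p$ does not vanish identically on the variety $\{\operatorname{Vec} p=0\}$. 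I expect this to be the main obstacle. The plan is to attack it either by a dimension count — $\{\operatorname{Vec} p=0\}$ is cut out by three real equations, hence is positive-dimensional, and one argues that $\real p$ is not in the real ideal it generates — or, more concretely, by exhibiting an explicit family of pure-quaternion substitutions generalising $ijk=-1$ (as in $p=xyz-zyx\mapsto-2$) that forces a nonzero scalar whenever $\real p\not\equiv 0$. Once a nonzero scalar is secured, the connectedness of $\Theta$ from the previous step gives $\Theta=[0,\pi]$ and closes the proof.
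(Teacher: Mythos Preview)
Your reduction via the $\mathrm{SO}(3)$-action to the four possible spans is correct and matches the paper's setup. The genuine gap is exactly where you flag it: in the case $\langle\Image p\rangle=\HH(\R)$ with $\sum_\sigma c_\sigma=0$ you have no proof that $p$ attains a nonzero scalar value, only two vague plans---a dimension count that does not actually show $\real p$ lies outside the real ideal of $\{\operatorname{Vec} p=0\}$, and an unspecified ``explicit family''. The connectedness of $\{p\neq 0\}$ is also asserted rather than proved: you claim the four real components of $p$ share no common factor but give no argument, and without it a real hypersurface inside $\{p=0\}$ could disconnect the domain and destroy your interval description of $\Theta$. So as written, the fourth case is not established.

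The paper bypasses both issues with one elementary observation you are missing: any two of the basic quaternions $1,i,j,k$ commute or anticommute, so every monomial evaluated on a fixed tuple of basic quaternions yields the \emph{same} basic quaternion up to sign. Hence $p$ evaluated on any tuple of basic quaternions is a real multiple of a single basic quaternion, automatically lying in $\R$ or in $V$. Once $\langle\Image p\rangle=\HH(\R)$, this immediately supplies a basic tuple with nonzero value in $\R$ and another with nonzero value in $V$---precisely the scalar value you could not construct, with no hypothesis on $\sum_\sigma c_\sigma$. From there the paper uses no topology at all: starting from the scalar-valued tuple it replaces entries one slot at a time by free variables until the image leaves $\R$, isolating a slot $i$ and quaternions $r_1,\dots,r_m,r_i^*$ with $p(r_1,\dots,r_m)=r\in\R\setminus\{0\}$ and $p(r_1,\dots,r_i^*,\dots,r_m)\notin\R$. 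Linearity in slot $i$ then produces $\tilde r_i$ with $p(r_1,\dots,\tilde r_i,\dots,r_m)=v\in V\setminus\{0\}$, the pencil $xr_i+y\tilde r_i$ hits every $xr+yv$, and a single conjugation rotates $v$ to any desired direction in $V$.
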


We also classify possible images of semi-homogeneous polynomials:
\begin{theorem}\label{homogen-q}
If $p$ is a semihomogeneous polynomial of weighted degree $d\neq 0$ evaluated on the
the quaternion algebra with Hamilton product $\HH(\R)$, then
$\Image p$ is either $\{0\}$,  or $\R$, or $\R_{\geq 0}$, or $\R_{\leq 0}$, or $V$, or some Zariski dense subset of $\HH$.
\end{theorem}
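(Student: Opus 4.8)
The plan is to deduce the real statement from the classification of images over $2\times 2$ matrices by complexifying, since $\HH(\R)\otimes_\R\C\cong M_2(\C)$ and a semihomogeneous $p$ with central (real) coefficients evaluates compatibly on the two algebras. First I would record the dictionary: for $q=a+v\in\HH(\R)$ with scalar part $a\in\R$ and vector part $v\in V$ of length $\rho=|v|$, the embedding into $M_2(\C)$ sends $q$ to a matrix with $\tr q=2a$ and $\det q=a^2+\rho^2$, so the conjugacy class of $q$ under unit quaternions is exactly the sphere $\{a+v':|v'|=\rho\}$ and is determined by the pair $(\tr q,\det q)$ subject to $\det q\ge(\tr q)^2/4$. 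Two invariances then drive everything. Semihomogeneity gives $p(\lambda^{w_1}x_1,\dots,\lambda^{w_m}x_m)=\lambda^d\,p(x_1,\dots,x_m)$ for every central $\lambda\in\R$, so $\Image p$ is stable under multiplication by $\lambda^d$, which ranges over all of $\R$ when $d$ is odd and over $\R_{\ge 0}$ when $d$ is even. Because $\HH$ is a division algebra, conjugation by a unit is an automorphism, so $\Image p$ is a union of the spheres above and is completely described by its \emph{profile} $\Sigma=\{(\tr p,\det p)(X):X\in\HH^m\}\subseteq\R^2$. Finally $\HH^m\cong\R^{4m}$ is connected, so $\Image p$ and $\Sigma$ are connected.

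Next I would invoke the classification established earlier in this series \cite{BMR4,M}: over $M_2(\C)$ the image of $p$ is $\{0\}$, the scalar matrices $\C$, the trace-zero matrices $\ssl_2(\C)$, or Zariski dense (over $\C$ the scaling is by all of $\C^{\ast}$, so no sign subtleties arise). If $p$ is an identity of $M_2(\C)$ it is one of $\HH$, giving $\Image p=\{0\}$. If the complex image is scalar, then on quaternion inputs $p$ takes values in $\HH\cap\C I=\R$, so $\Image p\subseteq\R$; being connected and stable under $\lambda^d$-scaling it is one of $\{0\}$, $\R_{\ge 0}$, $\R_{\le 0}$, $\R$ (the half-lines occurring only for even $d$, and $\R$ being forced once two scalars of opposite sign appear, or any nonzero scalar appears with $d$ odd). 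If the complex image is $\ssl_2(\C)$ then $\tr p\equiv 0$, the scalar part vanishes, and $\Image p\subseteq V\cong\R^3$; here conjugation acts as all of $SO(3)$, so $\Image p$ is a union of spheres about the origin whose radius set is a connected, $\R_{\ge 0}$-invariant subset of $\R_{\ge 0}$ containing $0$, hence $\{0\}$ or $\R_{\ge 0}$, giving $\Image p=\{0\}$ or $\Image p=V$.

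The remaining, and hardest, case is when the complex image is Zariski dense; here I must show $\Image p$ is Zariski dense in $\HH=\R^4$. The map $\psi=(\tr,\det)\circ p\colon M_2(\C)^m\to\C^2$ is a morphism defined over $\R$, and density of the complex image forces $\psi$ to be dominant, since the surjection $(\tr,\det)\colon M_2(\C)\to\C^2$ carries a Zariski-dense set to a Zariski-dense set. Hence the open locus $U$ where $d\psi$ has rank $2$ is nonempty and defined over $\R$; as the real form $\HH^m$ is Zariski dense in $M_2(\C)^m$, there is a real point $X_0\in U\cap\HH^m$. At $X_0$ the $\R$-defined surjection $d\psi_{X_0}$ restricts to a surjection of the real forms $\HH^m\to\R^2$, so $\psi|_{\HH^m}$ is a submersion at $X_0$ and the profile $\Sigma$ contains a nonempty open $\Omega\subseteq\R^2$. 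Picking a point of $\Omega$ with $\det>(\tr)^2/4$ (possible since $\Omega$ is open and the boundary parabola is one-dimensional), I note that $q\mapsto(\tr q,\det q)$ is itself a submersion on $\{\rho>0\}\subseteq\HH$, its Jacobian having rank $2$ precisely where $v\neq 0$. By conjugation invariance $\Image p$ contains the entire $(\tr,\det)$-preimage of $\Omega\cap\{\det>\tr^2/4\}$, a nonempty open subset of $\HH$; thus $\Image p$ has nonempty interior and is Zariski dense.

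I expect the genuine obstacle to be exactly this last case: ruling out an intermediate off-axis image, such as the three-dimensional cone $\{a+v:a=|v|\ge 0\}$, which is conjugation- and scaling-invariant and connected yet is neither one of the listed sets nor Zariski dense. The complexification argument dispatches it by upgrading two-dimensionality of the profile to full-dimensionality of $\Image p$ through the two submersion steps, and the delicate point is the descent to the real form, namely checking that an $\R$-defined complex submersion restricts to a real submersion on $\HH^m$ and that $U$ has real points; that is where I would concentrate the effort. A self-contained alternative would be to show directly that whenever $\Image p$ meets $\{a\neq 0,\ \rho>0\}$ the Jacobian of $(\real\, p,\ |p|^2)$ attains rank $2$, but controlling this for every semihomogeneous $p$ appears harder than borrowing the matrix theorem.
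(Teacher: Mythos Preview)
Your architecture matches the paper's: complexify via $\HH\otimes_\R\C\cong M_2(\C)$, invoke the classification of images on $M_2(\C)$ from \cite{BMR1} (not \cite{BMR4,M} as you cite), and then run a case analysis using conjugation- and scaling-invariance. The scalar and vector cases are handled essentially the same way in both; your $SO(3)$-orbit description is exactly the content of the paper's Lemma~\ref{cone-q}, and the paper supplements it with the observation $jij^{-1}=-i$ to absorb the sign, which your positive-radius argument does implicitly.

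The genuine divergence is in the Zariski-dense case. The paper dispatches it in one line and in fact uniformly for all four cases: since $\HH^m$ is Zariski dense in $M_2(\C)^m$ (in the \emph{complex} Zariski topology) and $p$ is a polynomial map, the complex Zariski closure of $p(\HH^m)$ coincides with that of $p(M_2(\C)^m)$, which by \cite{BMR1} is one of $\{0\}$, $\C$, $\ssl_2(\C)$, $M_2(\C)$; taking real points gives the real closure $\{0\}$, $\R$, $V$, or $\HH$. No differentials, no profile map, no submersion points. Your route through dominance of $(\tr,\det)\circ p$, a real point in the rank-$2$ locus, and a Euclidean open set in $\Image p$ is correct (the Jacobian of a real polynomial map has the same rank over $\R$ and over $\C$ at a real point, so your descent step is fine), but it proves strictly more than the theorem asks---nonempty Euclidean interior rather than mere Zariski density---at the cost of considerably more machinery. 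If you only want the stated conclusion, the paper's density-transfer argument is both shorter and cleaner; your argument would be the right one if the goal were upgraded to ``contains a nonempty open set.''
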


\section{Definitions and basic preliminaries}\label{def1-q}
\begin{definition}
By $K\langle x_1,\dots,x_m\rangle$ we
denote the free $K$-algebra generated by noncommuting variables
$x_1,\dots,x_m$, and refer to the elements of $K\langle
x_1,\dots,x_m\rangle$ as {\it polynomials}. Consider any algebra
$R$ over a field $K$. A polynomial $p\in K\langle
x_1,\dots,x_m\rangle$ is called a {\it polynomial identity} (PI)
of the algebra $R$ if $p(a_1,\dots,a_m)=0$ for all
$a_1,\dots,a_m\in R$;  $p\in K\langle x_1,\dots,x_m\rangle$ is a
{\it central polynomial} of $R$, if for any $a_1,\dots,a_m\in R$
one has $\mbox{$p(a_1,\dots,a_m)\in \Cent(R)$}$ but $p$ is not a
PI of $R$.
%The
%The
A polynomial $p$ (written as a sum of monomials) is called {\it
semi-homogeneous of weighted degree $d$} with (integer) {\it
weights} $(w_1,\dots,w_m)$  if for each monomial $h$ of $p$,
taking $d_j$ to be the degree of $x_{j}$ in $h$, we have
$$d_1w_1+\dots+d_mw_m=d.$$ A
semi-homogeneous polynomial with weights $(1,1,\dots, 1)$ is
called $\it{homogeneous}$ of degree $d$.

A polynomial $p$ is {\it completely homogeneous} of multidegree
$(d_1,\dots,d_m)$ if each variable $x_i$ appears the same number
of times $d_i$ in all monomials.
%The
A polynomial $p\in K\langle x_1,\dots,x_m\rangle$ is called {\it
multilinear} of degree $m$ if it is linear (i.e. completely homogeneous of
multidegree $(1,1,\dots,1)$). Thus, a polynomial is multilinear if
it is a polynomial of the form
$$p(x_1,\dots,x_m)=\sum_{\sigma\in S_m}c_\sigma
x_{\sigma(1)}\cdots x_{\sigma(m)},$$ where $S_m$ is the
%$m$-th
symmetric group in $m$ letters and the coefficients $c_\sigma$ are
constants in $K.$
\end{definition}
\begin{definition}
By quaternion algebra $\HH$ with Hamiltonian product we mean a four-dimensional algebra $\langle 1,i,j,k\rangle_\R$ such that
$$i^2=j^2=k^2=-1; ij=-ji=k; jk=-kj=i; ki=-ik=j.$$ In this algebra quaternions of $\R=\langle 1\rangle_\R$ are called scalars, and $V=\langle i,j,k\rangle_\R$ are called vectors. By basic quaternions we mean the following four elements of $\HH$: $\{1,i,j,k\}$. We also will use the standard quaternion functions: the norm $\left\vert\left\vert a+bi+cj+dk\right\vert\right\vert=\sqrt{a^2+b^2+c^2+d^2}$, the real part $\real (a+bi+cj+dk)=a$, the vector part $\ve (a+bi+cj+dk)=bi+cj+dk$.
\end{definition}
\begin{rem}
Any quaternion can be uniquely written as a sum of a scalar and a vector, hence the functions of real and vector parts are well defined. The function of norm is multiplicative.
\end{rem}
In \cite{A} Almutairi proved that if $p$ is a non-central multilinear polynomial then its image contains all vectors. In this note we provide a complete classification of the possible images.

\section{Multilinear case}
Let us start with proving the following not difficult but important lemmas:

\begin{lemma}\label{elem-q} Let $p$ be a multilinear polynomial.
If $a_i$ are basic quaternions, then   $p(a_1,\dots,a_m)$ is $c\cdot q$ for some basic quaternion $q$ and some scalar $c\in\R$ (in particular, in some cases $c$ can equal $0$).
\end{lemma}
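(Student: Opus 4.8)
The plan is to reduce the claim for the whole polynomial to a statement about a single monomial, and then to a purely group-theoretic fact about the quaternion group. Writing $p$ in its multilinear form $p(x_1,\dots,x_m)=\sum_{\sigma\in S_m}c_\sigma x_{\sigma(1)}\cdots x_{\sigma(m)}$ and substituting the fixed basic quaternions $x_\ell=a_\ell$, every monomial becomes a product $a_{\sigma(1)}\cdots a_{\sigma(m)}$ of the same factors $a_1,\dots,a_m$, taken in the order prescribed by $\sigma$. Thus it suffices to control what these reorderings can produce.

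First I would observe that the eight signed basic quaternions $\{\pm 1,\pm i,\pm j,\pm k\}$ form a group $Q_8$ under the Hamilton product, so any product of basic quaternions is again $\pm$ a basic quaternion. In particular each monomial evaluates to $\varepsilon_\sigma\, q_\sigma$ with $\varepsilon_\sigma\in\{\pm 1\}$ and $q_\sigma\in\{1,i,j,k\}$.

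The heart of the argument is to show that $q_\sigma$ does not in fact depend on $\sigma$. For this I pass to the quotient $Q_8/\{\pm 1\}$. Since $\{\pm 1\}$ is central in $Q_8$ and the quotient has order $4$ with every nontrivial element of order $2$ (because $\bar i^2=\bar j^2=\bar k^2=\overline{-1}=\bar 1$), this quotient is the Klein four-group and hence abelian. Consequently the image of $a_{\sigma(1)}\cdots a_{\sigma(m)}$ in $Q_8/\{\pm 1\}$ depends only on the multiset $\{a_1,\dots,a_m\}$ and not on the order $\sigma$. Lifting back, there is a single basic quaternion $q$ with $a_{\sigma(1)}\cdots a_{\sigma(m)}=\varepsilon_\sigma q$ for every $\sigma$, the sign $\varepsilon_\sigma$ being the only thing that varies.

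Summing the monomials then gives $p(a_1,\dots,a_m)=\big(\sum_{\sigma}\varepsilon_\sigma c_\sigma\big)q=c\,q$ with $c=\sum_\sigma\varepsilon_\sigma c_\sigma\in\R$, which is exactly the asserted form (and $c=0$ is allowed). I do not expect a genuine obstacle here: the computation is elementary once the abelian quotient is spotted. The only point requiring a little care is the sign bookkeeping, which is precisely what passing to $Q_8/\{\pm 1\}$ circumvents; a minor, harmless case is that some $a_\ell$ may equal the scalar $1$, which simply contributes the identity of the quotient and leaves the argument unchanged.
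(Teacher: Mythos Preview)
Your proof is correct and follows essentially the same approach as the paper. The paper simply observes that any two basic quaternions satisfy $q_1q_2=\pm q_2q_1$, so all reorderings of $a_{\sigma(1)}\cdots a_{\sigma(m)}$ agree up to sign; you phrase this same fact as the abelianness of $Q_8/\{\pm 1\}$, which is an equivalent (and slightly more formalized) statement of the same idea.
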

\begin{proof}
Note that for any two basic quaternions $q_1$ and $q_2,$ $q_1q_2=\pm q_2q_1$. Therefore, taking products of $m$ basic quaternions we obtain the same result (up to $\pm$).
Thus the sum of these results multiplied by scalars must be an basic quaternion multiplied by some scalar coefficient.
\end{proof}
\begin{lemma}\label{cone-1-q}
For any multilinear polynomial $p$ $\Image p$ is a self similar cone, i.e. for any invertible $h\in\HH$ and any scalar $c\in\R$ and any element $\alpha\in\Image p$ $ch\alpha h^{-1}\in\Image p.$
\end{lemma}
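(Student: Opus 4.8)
The plan is to prove that $\Image p$ is a self-similar cone by showing the two closure properties separately: closure under scalar multiplication and closure under conjugation by invertible elements.

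The plan is to reduce the claim to two independent closure properties of $\Image p$ and to verify each by exploiting a structural feature of the situation: multilinearity handles scaling by $c$, while the fact that conjugation by an invertible element is an inner automorphism of $\HH$ handles the map $\alpha\mapsto h\alpha h^{-1}$. Fix $\alpha\in\Image p$, so that $\alpha=p(a_1,\dots,a_m)$ for some $a_1,\dots,a_m\in\HH$; I will produce explicit arguments showing $ch\alpha h^{-1}$ lies in the image.

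First I would treat the scalar factor. Since $c\in\R$ is central in $\HH$ and $p$ is homogeneous of degree one in each variable, scaling a single argument by $c$ pulls the scalar straight through every monomial, giving $p(ca_1,a_2,\dots,a_m)=c\,p(a_1,\dots,a_m)=c\alpha$. Hence $c\alpha\in\Image p$ for every real $c$ (including $c=0$ and negative $c$), which already establishes that $\Image p$ is a cone.

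Next I would treat conjugation. Writing $p=\sum_{\sigma\in S_m}c_\sigma x_{\sigma(1)}\cdots x_{\sigma(m)}$, the key observation is that inner conjugation by an invertible $h$ telescopes across a product: in each monomial the intermediate factors $h^{-1}h$ cancel, so $h(a_{\sigma(1)}\cdots a_{\sigma(m)})h^{-1}=(ha_{\sigma(1)}h^{-1})\cdots(ha_{\sigma(m)}h^{-1})$. Summing over $\sigma$ yields $h\,p(a_1,\dots,a_m)\,h^{-1}=p(ha_1h^{-1},\dots,ha_mh^{-1})$, so $h\alpha h^{-1}\in\Image p$. Combining the two steps, I would substitute $cha_1h^{-1}$ into the first slot and $ha_ih^{-1}$ into the remaining slots to obtain $ch\alpha h^{-1}=p(cha_1h^{-1},ha_2h^{-1},\dots,ha_mh^{-1})\in\Image p$, as required.

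There is no serious obstacle here; the argument is essentially a verification. The only point that deserves care is the centrality of $c$: the identity $p(ca_1,\dots)=c\,p(a_1,\dots)$ uses that $c\in\R=Z(\HH)$ commutes with all the $a_i$, so it may be pulled freely to the front of each monomial. One should also note explicitly that no division or invertibility of the $a_i$ is needed, so the conclusion holds for arbitrary evaluations, and that both constructions keep all arguments inside $\HH$, so the resulting element is genuinely a value of $p$ on $\HH$.
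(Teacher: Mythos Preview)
Your proof is correct and follows essentially the same approach as the paper: you use the telescoping of conjugation across each monomial to get $p(ha_1h^{-1},\dots,ha_mh^{-1})=h\alpha h^{-1}$, and multilinearity in one slot to absorb the scalar $c$, then combine them via the single substitution $p(cha_1h^{-1},ha_2h^{-1},\dots,ha_mh^{-1})=ch\alpha h^{-1}$. The paper's proof is terser but identical in content.
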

\begin{proof}
If $p(x_1,\dots,x_m)=\sum_{\sigma\in S_m}c_\sigma
x_{\sigma(1)}\cdots x_{\sigma(m)},$ then
\begin{equation}
\begin{split}
p(hx_1h^{-1},hx_2h^{-1}\dots,hx_mh^{-1})=
\sum_{\sigma\in S_m}c_\sigma
hx_{\sigma(1)}h^{-1}\cdots hx_{\sigma(m)}h^{-1}=\\ =hp(x_1,\dots,x_m)h^{-1},
\end{split}
\end{equation}
 and
thus $p(chx_1h^{-1},hx_2h^{-1}\dots,hx_mh^{-1})=ch\alpha h^{-1}\in\Image p.$
\end{proof}
\begin{lemma}\label{cone-q} The set of vectors $V$ is an irreducible self-similar cone.
\end{lemma}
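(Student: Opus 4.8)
The plan is to verify separately the two claims packaged in the statement: that $V$ is a self-similar cone in the sense of Lemma~\ref{cone-1-q}, and that $V$ is irreducible as an algebraic variety.

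For the cone property, I would first note that $V=\langle i,j,k\rangle_\R$ is by definition a real subspace of $\HH$, so it is automatically closed under multiplication by any scalar $c\in\R$. The real content is closure under conjugation $v\mapsto hvh^{-1}$ by an arbitrary invertible $h\in\HH$. Writing $\overline{q}$ for the quaternion conjugate, I would use the three standard facts $\overline{ab}=\overline{b}\,\overline{a}$, $\overline{v}=-v$ for $v\in V$, and $h^{-1}=\overline{h}/\|h\|^{2}$, and compute
\begin{equation*}
\overline{hvh^{-1}}=\overline{h^{-1}}\,\overline{v}\,\overline{h}=-\,\overline{h^{-1}}\,v\,\overline{h}=-\,\frac{h}{\|h\|^{2}}\,v\,\bigl(\|h\|^{2}h^{-1}\bigr)=-\,hvh^{-1}.
\end{equation*}
Thus $hvh^{-1}$ equals minus its own conjugate, so it has vanishing real part and lies in $V$. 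An alternative would be to write $h=\|h\|\,u$ with $u$ a unit quaternion (legitimate since scalars are central) and to invoke the classical fact that conjugation by unit quaternions acts on $V$ as a rotation; the direct computation has the advantage of being self-contained.

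For irreducibility, I would observe that $V$ is exactly the zero locus of the linear function $\real$, hence a three-dimensional linear subspace of $\HH\cong\R^{4}$. A linear subspace (indeed any affine subspace) is irreducible as an algebraic variety, since its coordinate ring is a polynomial ring and therefore an integral domain; equivalently, it admits no decomposition as a union of two proper Zariski-closed subsets.

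The only genuinely delicate point I anticipate is the conjugation computation --- keeping the non-commutativity straight and inverting $h$ correctly through $\overline{h}$ --- together with fixing the precise sense in which ``irreducible'' is meant. Since $V$ is a full linear subspace, both properties are in the end routine; the value of the lemma is to record them for the classification, where the self-similar-cone structure will constrain $\Image p$ and irreducibility will feed the Zariski-closure arguments underlying Theorem~\ref{homogen-q}.
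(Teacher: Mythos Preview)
Your verification that $V$ is closed under conjugation and scaling is correct and clean; the paper in fact takes this for granted and does not write it out. The gap is in the second half: you have misread what ``irreducible'' means here. In this paper an \emph{irreducible self-similar cone} is not an irreducible variety but a self-similar cone with no proper nonzero self-similar sub-cone; equivalently, the self-similar cone generated by any one nonzero element is already the whole of $V$. Concretely, one must show that for every nonzero $v\in V$ and every target $w\in V$ there exist $c\in\R$ and invertible $h\in\HH$ with $c\,hvh^{-1}=w$. This transitivity is exactly how the lemma is invoked in the proof of Theorem~\ref{main-q} (to pass from a single vector value $v$ to an arbitrary vector $bi+cj+dk$) and in the proof of Theorem~\ref{homogen-q} (to conclude $\Image p\cup(-\Image p)=V$). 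Zariski irreducibility of a linear subspace says nothing about this orbit structure, so your argument does not supply what is actually needed.

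The paper's own proof does precisely the missing step: it starts from $i$, conjugates by $h=1+yj+zk$ to get $(1-y^2-z^2)i+2zj-2yk$, and then solves explicitly for $y,z$ and a scalar so as to hit an arbitrary $ai+bj+ck$. An alternative you already hinted at would also work: write $h=\|h\|u$ with $u$ a unit quaternion and use that conjugation by unit quaternions realises all of $SO(3)$ on $V$, hence is transitive on each sphere; combined with real scaling this gives transitivity on $V\setminus\{0\}$. Either route closes the gap, but the bare observation that a hyperplane is an irreducible variety does not.
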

\begin{proof}
It is enough to show that any self-similar cone including element $i$ contains $V$. 
Take $h(y,z)=1+yj+zk$, thus $h^{-1}=\frac{1-yj-zk}{1+y^2+z^2}$.
Thus a minimal self-similar cone $C$ containing $i$ contains 
all elements $c\cdot hih^{-1}$, in particular it contains all elements $(1+yj+zk)i(1-yj-zk)=(1-y^2-z^2)i+2zj-2yk.$
Consider an arbitrary vector $ai+bj+ck$. If $b=c=0$ then this vector belongs to $C$ because it is a multiple of $i$. Assume that at least one of $b$ and $c$ is nonzero. Then $b^2+c^2>0$, hence one can take $l=\frac{a+\sqrt{a^2+b^2+c^2}}{2}$, 
$y=-\frac{c}{a+\sqrt{a^2+b^2+c^2}}$, $z=\frac{b}{a+\sqrt{a^2+b^2+c^2}}$, these numbers are well defined. Thus the element $(1+yj+zk)i(1-yj-zk)=(1-y^2-z^2)i+2zj-2yk=ai+bj+ck$ belongs to $C$.
\end{proof}

Now we are ready to prove the main theorem:
\medskip

\begin{proof-of-thm}{\ref{main-q}}
Let us substitute basic quaternions in the polynomial $p$. According to Lemma \ref{elem-q} as a result we will obtain multiples of basic quaternions. Consider four possible cases: all these results vanish, among these results there are scalars only, among these results there are vectors only, and among these results there are both, vectors and scalars.
The first two cases quickly lead to answers about the image of the polynomial $p$: in the first case $p$ is PI, its image is $\{0\}$, and in the second case it is central polynomial, and its image is $\R$.
In the third case its image is $V$, it follows from the Lemma \ref{cone-q}. Therefore the most interesting case is the fourth one.
We assume that there are basic quaternions 
$x_1,\dots,x_m$ and $y_1,\dots,y_m$ such that $p(x_1,\dots,x_m)=k\in\R\setminus\{0\}$ and $p(y_1,\dots,y_m)=v\in V\setminus\{0\}$. We will show that in this case image of $p$ is the set of all the quaternions. For that let us consider the following $m+1$ polynomials depending on $z_1,\dots,z_m$:
$A_1=p(x_1,x_2,\dots,x_m); A_2=p(z_1,x_2,\dots,x_m);
A_3=p(z_1,z_2,x_3,\dots,x_m);\dots; A_{m+1}=p(z_1,\dots,z_m)$.
Note that $A_1$ is a constant polynomial taking only one possible value (which is a nonzero scalar), for any $i\ \Image A_i\subseteq\Image A_{i+1}$ and $\Image A_{m+1}=\Image p$ includes nonscalar values. Therefore, there exists $i$ such that   $\Image A_i\subseteq\R$ and  $\Image A_{i
+1}\not\subseteq\R$. Thus there exist some collection of quaternions $r_1,r_2,\dots,r_m,r_i^*$ such that 
$p(r_1,r_2,\dots,r_m)=r\in\R\setminus\{0\}.$ and $p(r_1,r_2,\dots,r_{i-1}, r_i^*, r_{i+1},\dots,r_m)~\notin~\R$. Assume that $p(r_1,r_2,\dots,r_{i-1}, r_i^*, r_{i+1},\dots,r_m)=a+v$ for $a\in\R$ and $v\in V$. Then $v\neq 0$. If $a=c\cdot p(r_1,r_2,\dots,r_m)$ we can take 
$\tilde r_i=r_i^*-cr_i$, and 
$$p(r_1,r_2,\dots,r_{i-1}, \tilde r_i, r_{i+1},\dots,r_m)=v\in V\setminus\{0\}.$$
Note that for arbitrary real numbers $x$ and $y$ we have an element 
$$p(r_1,r_2,\dots,r_{i-1}, xr_i+yr_i\tilde r_i, r_{i+1},\dots,r_m)=xr+yv.$$ Consider an arbitrary quaternion $a+bi+cj+dk,$ where $a,b,c,d\in\R$. Let us take an $x$ such that $xr=a$. According to Lemma \ref{cone-q}, $V$ is an irreducible self-similar cone, thus  there exist $h\in\HH$ and $y\in\R$ such that $yhvh^{-1}=bi+cj+dk.$ Hence,  
$p(hr_1h^{-1},hr_2h^{-1},\dots,hr_{i-1}h^{-1},h(xr_i+y\tilde r_i)h^{-1},hr_{i+1}h^{-1},\dots,hr_mh^{-1}=
h(xr+yv)h^{-1}=xr+yhvh^{-1}=a+bi+cj+dk.$ Therefore, $\Image p=\HH$.
\end{proof-of-thm}

\medskip

\section{The homogeneous case}

%\begin{lemma}\label{clos-q}
%For any homogeneous polynomial $p(x_1,\dots,x_m)$ evaluated on $\HH$ $\Image p$ is closed with respect to standard real topology.
%\end{lemma}
%\begin{proof}
%Consider an Image of $p$ evaluated on tubes $(x_1,\dots,x_m)$ such that $\left\vert\left\vert x_i\right\vert\right\vert=1$ for all $i$. Then it is a continuous image of the compact space, since each $x_i$ belongs to the $3$-dimensional sphere in $\HH$ considered as $\R^4$, which is a compact tolopogical space, the product of compact spaces is a compact space, and the continuous image of the compact space is compact. Thus $I$, the image of $p$ evaluated on a set of all such tubes is a compact space. Note that image of $p$ evaluated on all possible tubes $(x_1,\dots,x_m)$ is a cone generated by $I$, therefore it must be a closed set.
%\end{proof}

Now we can consider the semihomogeneous case and present the proof of Theorem \ref{homogen-q}:

\begin{proof-of-thm}{\ref{homogen-q}}

We will use a famous fact that there exists an isomorphism 
$$\Phi: \HH(\R)\otimes \C_{\R}\rightarrow M_2(\C)_{\R}$$ such that for any $q=a+bi+cj+dk\in\HH$ and any $z\in\C$
$$\Phi(q\otimes z)= z\cdot
\begin{bmatrix}
    a+bi       & c+di  \\
    -c+di      & a-bi
\end{bmatrix}.$$

Note that $\Phi(\HH\otimes 1_\C)$ i.e. the set of matrices 
$$\begin{bmatrix}
    a+bi       & c+di  \\
    -c+di      & a-bi
\end{bmatrix}$$
is Zariski dense in $M_2(\C)$. In \cite[Theorem 1]{BMR1} we classified possible images of semihomogeneous polynomial evaluated on $M_2(K)$, where $K$ is arbitrary quadratically closed field. We showed that the following images may occur: 
$\Image p$ is either $\{0\}$, $K$, the set of all non-nilpotent matrices having trace zero,
$\ssl_2 (K)$, or a dense subset of $M_2 (K)$ (with respect to Zariski topology). Therefore, the Zariski closure of the image of polynomial evaluated on $2\times 2$ matrices with complex entries must be either $\{0\}$, or $\C$, or $\ssl_2 (\C)$ or $M_2 (\C)$. Thus an image of polynomial evaluated on quaternions is Zariski dense in $\{0\}$, or $\R$, or $V$ or $\HH$.
Note that if $p$ is a semihomogeneous polynomial, then there exist numbers $w_1,\dots,w_m$ and $d\neq 0$ such that for any $c\in\R$ 
$p(c^{w_1}x_1,c^{w_2}x_2,\dots,c^{w_m}x_m)=c^dp(x_1,\dots,x_m)$. Therefore $\Image p$ must be a cone with respect to positive real multipliers, i.e. for any $x\in\Image p$ and any $\lambda>0,$ $\lambda x\in\Image p$. Let us consider four possible cases: the image is Zariski dense in $\{0\}$, in $\R$, in $V$, and in $\HH$.
In the first case $p$ is PI, and its image is $\{0\}$. In the second case $p$ is a central polynomial, therefore an image of $p$ being a cone with respect to positive multipliers should be either $\R$, or $\R_{\geq 0}$, or $\R_{\leq 0}$.
In the third case, $p$ takes only vector values. According to Lemma \ref{cone-q} we know that $V$ is an irreducible self-similar cone (up to real multipliers), and $\Image p$ is a self-similar cone up to positive real multipliers, therefore $\Image p\cup  (-\Image p)=V,$ i.e. for any vector $v\in V$ either $v$, or $-v$ belongs to $\Image p$. Without loss of generality, assume that $i\in\Image p$. Hence $jij^{-1}\in\Image p$, and $jij^{-1}=-i$. Hence for any $c\in\R$ $ci\in V$, and thus any element $v\in V$ is conjugated with some $ci$, as we showed in the proof of Lemma \ref{cone-q}. Therefore, $\Image p=V$. In the forth case an image of $p$ is a Zariski dense subset of $\HH$.
\end{proof-of-thm}

Let us show some interesting examples of multilinear and homogeneous polynomials:
\begin{examples}
\begin{enumerate}\eroman  
\item A multilinear polynomial can be PI (according to Amitsur-Levitsky Theorem $s_4$ evaluated on $2\times 2$ matrices is a PI, and thus is PI on quaternions).
\item A Lie bracket $p(x,y)=xy-yx$ is a vector-valued polynomial, and its image is $V$.
\item a square of any vector is a scalar, thus the polymomial 
$p(x,y)=xy+yx=(x+y)^2-x^2-y^2$ evaluated on vectors $V$ takes only scalar values, therefore the polynomial $p(x_1,x_2,x_3,x_4)=[x_1,x_2][x_3,x_4]+[x_3,x_4][x_1,x_2]$ evaluated on $\HH$ is a multilinear central polynomial.
\item The image of the polynomial $p(x)=x$ is the set of all quaternions.
\item The polynomial $p(x,y)=[x,y]^2$ is central, and in \cite{M} in order to provide an example of complete homogeneous polynomial evaluated of $2\times 2$ matrices with real entries and taking only positive central values I took a square of $p$. Note that evaluated on quaternions we do not need to take a square. This polynomial takes only non positive values: indeed, $(ai+bj+ck)^2=-a^2-b^2-c^2$, and $\Image p$ is $\R_{\leq 0}$. Hence, $-p(x,y)=-[x,y]^2$ is an example of the polynomial with image set $\R_{\geq 0}$.

\item The polynomial $p(x,y)=[x,y]^2+[x^2,y^2]$ is the sum of two polynomials, the image of the first term is $\R_{\leq 0}$, and the second one has image $V$, thus image of $p$ is the set of quaternions with non-positive real part. Of course $-p$ has an opposite image: the set of quaternions with non-negative real part.

\item One of the examples (see \cite[Example 4(i)]{BMR1}) is suitable for our case: the polynomial $g(x_1 , x_2 ) = [x_1 , x_2 ]^2$ has the property that $g(A, B) = 0$
whenever $A$ is scalar, but $g$ can take on a non-zero value whenever $A$ is non-scalar. Thus, $g(x_1 , x_2 )x_1$ takes all values except scalars.

\item Recall that any quaternion $q=a+v, a\in\R, v\in V$ can be considered as a $2\times 2$ matrix with complex entries, its eigenvalues are $\lambda_{1,2}=a\pm ni,$ where $n=\left\vert\left\vert v\right\vert\right\vert$ is the norm of the vector part of the quaternion. In particular 
$\tr\, q=2a=2\real q$ and $\det q=a^2+n^2=\left\vert\left\vert q\right\vert\right\vert$. In \cite[Example 4(ii)]{BMR1} we provided an example of the polynomial taking all possible values except for those where the ratio of eigenvalues belongs to some set $S$. However here we should have a polynomial with real coefficients. Nevertheless this is possible: 
let $S$ be any finite subset of $S^2$ -- a unit circle on the complex plane. There exists a
completely homogeneous polynomial $p$ such that $\Image p$ is the
set of all quaternions  except the quaternions with ratio of
eigenvalues from $S$. Any $c\in S$ can be written in the form $c=\frac{a+bi}{a-bi}$ for some $a,b\in\R$. The construction of the polynomial is as follows. Consider
$$f(x)=x\cdot\prod_{c\in
S}((a+bi)\lambda_1-(a-bi)\lambda_2)((a+bi)\lambda_2-(a-bi)\lambda_1),$$ where
$\lambda_{1,2}$ are eigenvalues of $x$ and $c=\frac{a+bi}{a-bi}$. For each $c$ the
product $(a+bi)\lambda_1-(a-bi)\lambda_2)((a+bi)\lambda_2-(a-bi)\lambda_1)=-a^2(\lambda_1-\lambda_2)^2-b^2(\lambda_1+\lambda_2)^2$
is
%symmetric polynomial of $\lambda_1$ and $\lambda_2$.
%Hence it is
a polynomial with real coefficients in $\tr\ x$ and $\tr\ x^2$. Thus $f(x)$ is a
polynomial with traces, and by \cite[Theorem
1.4.12]{Row}, one can rewrite each trace in $f$ as a fraction of
multilinear central polynomials (see \cite[equation (3) in Proposition 1]{BMR1} for details).
%of more than one variable, where the denominator is central.
After that we multiply the expression by the product of all the
denominators, which we can take to have value 1. We obtain a
completely homogeneous polynomial $p$ which image is the cone
under $\Image f$ and thus equals $\Image f$. The image of $p$ is
the set of all quaternions with ratios of eigenvalues
not belonging to $S$.
\end{enumerate}
\begin{rem}
A problem of classification of all possible Zariski dense images of semihomogeneous polynomials evaluated on $\HH$ remains being open.
\end{rem}
\end{examples}

\end{document}